\documentclass[a4paper,11pt,reqno]{amsart}
\usepackage[centertags]{amsmath}
\usepackage{amsfonts,amssymb,amsthm} 
\usepackage[dvips]{graphicx} 
\usepackage{psfrag}
\usepackage[english]{babel}
\usepackage{newlfont}
\usepackage{mathrsfs}
\usepackage{color}
\usepackage[body={16cm,22.5cm},centering]{geometry} 
\usepackage{fancyhdr}
\pagestyle{fancy}

\fancyhf{}
\fancyhead[RO,LE]{\footnotesize\thepage}
\fancyhead[LO]{\scriptsize\rightmark}
\fancyhead[RE]{\scriptsize\leftmark}

\setlength{\headheight}{12pt}  
\setlength{\headsep}{25pt} 
%\usepackage[active]{srcltx}
%\usepackage[scrtime]{prelim2e} 
%\usepackage[notref,notcite]{showkeys} 

% THEOREM Environments ---------------------------------------------------
\newtheorem{theorem}{Theorem}

\newtheorem{lemma}{Lemma}

\newtheorem{definition}{Definition}

%%%Macro
\newcommand{\vphi}{\varphi}

\newcommand{\R}{\mathbb{R}}

\newcommand{\Ha}{\mathcal{H}}
\newcommand{\T}{\mathbb{T}}

\newcommand{\la}{\langle}
\newcommand{\ra}{\rangle}
\mathchardef\emptyset="001F

%%%%%%%%%%%%%%%%%%%%%%%%%%%%%%%%%%%%%%%%%%%%%%%%%%%%%%%%%%%%%%%%%%%%%%%%%%%%%%%%%%%%%%%%%%
%%%%%%%%%%%%%%%%%%%%CORPO PRINCIPLE%%%%%%%%%%%%%%%%%%%%%%%%%%%%%%%%%%%%%%%%%%%%%%

\begin{document}

\title[Remark on a nonlocal isoperimetric problem]{Remark on a nonlocal isoperimetric problem}

\author{Vesa Julin}

\address{Department of Mathematics and Statistics, 
University of Jyv\"{a}skyl\"{a}, Finland}
\email{vesa.julin@jyu.fi}

\maketitle

%%%%%%%%%%%%%%%%%%%%%%%%%%%%%%%%%%%%%%%%%%%%%%%%%%%%%%%%%%%%%%%%%%%%%%%%%%%%%%%%%%%%%%%%%%%%%%
\begin{center}
\begin{minipage}{12cm}
\small{ 
\noindent {\bf Abstract.} 
We consider  isoperimetric problem with a nonlocal repulsive term given by the Newtonian potential. We prove that 
regular critical sets of the functional are analytic. This optimal regularity  holds also for critical sets of the 
Ohta-Kawasaki functional. We also prove that when the strength of the nonlocal part is small the ball is the only possible stable critical set.

\bigskip
\noindent {\bf 2010 Mathematics Subject Class.} 
49Q10, 49Q20, 82B24
}
\end{minipage}
\end{center}

\bigskip

%\tableofcontents

%%%%%%%%%%%%%%%%%%%%%%%%%%%%%%%%%%%%%%%%%%%%%%%%%%%%%%%%%%%%%%%%%%%%%%%%%%%%%%%%%%%%%%%%%%%%%%%%%%%%%

\section{Introduction}
In this short note we study critical sets of the functional
\begin{equation} \label{energy}
J(E) = P(E) +\gamma \int_{E}\int_{E} G(x,y) dxdy
\end{equation}
where $G(\cdot,\cdot)$ is the standard Newtonian kernel
\[
G(x,y) = \begin{cases}  
 \frac{1}{2\pi} \log \left( \frac{1}{|x-y|} \right) \qquad (n=2) \\
\frac{1}{n(n-2)\omega_n} \frac{1}{|x-y|^{n-2}}\qquad (n\geq 3)
\end{cases}
\]
and $P(E)$ denotes the surface measure, or the perimeter of the set $E$.  This model was first introduced by Gamov \cite{Ga} in the physically relevant case $n=3$ to model the stability of  atomic nuclei. It also rises as a ground state  problem from the Ohta-Kawasaki functional introduced by Ohta and Kawasaki \cite{OK}  to  model diblock copolymers. In the periodic setting the Ohta-Kawasaki  functional can be written as
\begin{equation} \label{energy2}
J_{\T^n}(E) = P(E) +\gamma \int_{\T^n}\int_{\T^n} G_{\T^n}(x,y) u_E(x)u_E(y) dxdy,
\end{equation}
where $G_{\T^n}(\cdot,\cdot)$ is the Green's function in the flat torus and $u_E = 2\chi_E-1$. Both with \eqref{energy}  and \eqref{energy2} we are interested in minimizing the  functional under 
 volume constraint.

There has been an increasing interest  among mathematicians to study the above  functionals \cite{AFM, ACO, BC, CP, CS, CiSp, FL, Go, GMS1, GMS2, KnMu1, KnMu2, LO, StTo, Topa}.  Besides from the obvious physical applications, the main motivation to study   \eqref{energy}  and \eqref{energy2}  is that they  feature the  competition between a short range  interfacial  force, described here  by the perimeter,  which prefers the minimizer to be smooth and connected and a long range repulsive force which prefers the minimizer to be scattered. Indeed, under volume constraint   the ball minimizes the perimeter   by the isoperimetric inequality, while it  maximizes  the nonlocal part  (see e.g. \cite{LL}).

By a scaling argument we notice that when the volume is small the nonlocal term in \eqref{energy} becomes small. This suggest that for $n \geq 3$  the ball should be  the minimizer of \eqref{energy}  under volume constraint when the volume  is small, or equivalently when $\gamma$ is small. This was first proved in \cite{vesku, KnMu2}   and  generalized  in  \cite{BC} to  more general potentials and in \cite{FFMMM} to  nonlocal perimeter. On the other hand when the volume is large the repulsive term becomes stronger and it was proved in  \cite{KnMu2, LO} for $n=3$  that the minimization problem does not have a solution. 

In this note we are interested in critical sets which are not necessarily minimizers. The first result of this paper concerns the regularity of critical sets. To state the result we denote the Newtonian potential by  
\begin{equation} \label{potential}
v_E(x) =  \int_{E} G(x,y) dy.
\end{equation}
The Euler equation associated with  \eqref{energy}  can be then be  written as 
\begin{equation}
\label{euler}
H_E + 2 \gamma v_E = \lambda \qquad \text{on } \, \partial E,
\end{equation}
where $H_E$ is the mean curvature. We say that a  $C^2$-regular set (the boundary is a $C^2$-hypersurface)  is critical if it satisfies \eqref{euler}.  It was proved in \cite{AFM, CiSp, KnMu2} that regular critical sets are $C^{3,\alpha}$-regular for every $\alpha \in (0,1)$ and  then   in \cite{JP} that they are in fact $C^{\infty}$-regular.   We use the method developed in \cite{KLM} to  prove the sharp regularity of   critical sets.
\begin{theorem} \label{regularity}
If  $E \subset \R^n$ is  a regular critical set of \eqref{energy} then it is  analytic. 
\end{theorem}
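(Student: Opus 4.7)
The plan is to upgrade the known $C^\infty$-regularity (proved in \cite{JP}) to analyticity by a bootstrap argument on a local graph representation. Fix $x_0 \in \partial E$; after rotation, write $\partial E = \{(x', u(x')) : x' \in B_r\}$ near $x_0$ for some smooth $u$. In this parametrization the Euler equation \eqref{euler} becomes
\[
\mathcal{F}(u)(x') := H(u)(x') + 2\gamma W(u)(x') - \lambda = 0,
\]
where $H(u) = \div(Du/\sqrt{1+|Du|^2})$ is the quasilinear mean curvature operator and $W(u)(x') = v_E(x', u(x'))$ is the trace of the Newtonian potential along the graph, $v_E$ being given by \eqref{potential} with $E$ determined by $u$ near $x_0$ and fixed smoothly away from it.

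The operator $H$ is uniformly elliptic with coefficients that are analytic functions of $Du$, so for a purely local problem of this type the classical Morrey--Friedman theory would immediately deliver analyticity of any $C^{2,\alpha}$ solution. The entire difficulty is therefore contained in showing that $u \mapsto W(u)$ defines an analytic map between appropriate spaces, say $C^{2,\alpha}(B_r)$ into $C^{0,\alpha}(B_r)$, so that the combined operator $\mathcal{F}$ is analytic, quasilinear and uniformly elliptic.

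Following the method of \cite{KLM}, I would split $v_E$ via a fixed smooth cutoff into a far-field piece and a near-field piece. The far-field contribution involves $G(x,y)$ with $|x-y|$ bounded below and, by direct power-series expansion of the kernel, is an analytic function of $(u,Du)$. The near-field contribution is flattened via an analytic change of variables in the graph coordinates, rewriting it as an integral of the analytic-off-diagonal kernel $G$ against the characteristic function of a half-space composed with an analytic-in-$u$ diffeomorphism; the nonlocal analyticity estimates from \cite{KLM} then yield analytic dependence of this piece on $u$ as well.

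With analyticity of $\mathcal{F}$ in hand, the conclusion follows either from the analytic implicit function theorem (applied after dealing with the Lagrange multiplier $\lambda$) or from direct majorant estimates of the form $\|D^k u\|_\infty \leq C^{k+1} k!$ obtained by differentiating the equation and tracking the combinatorics of Fa\`a di Bruno's formula. The main obstacle is precisely this nonlocal term: although $v_E$ itself is not smooth across $\partial E$ (the second normal derivative jumps because $-\Delta v_E = \chi_E$), its restriction to the hypersurface $\partial E$ must nevertheless be analytic, and capturing this phenomenon quantitatively with uniform factorial bounds is the substance of the argument.
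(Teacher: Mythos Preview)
Your proposal misidentifies the core mechanism of \cite{KLM}. That paper does \emph{not} supply ``nonlocal analyticity estimates'' for an operator like $u\mapsto W(u)=v_E(\cdot,u(\cdot))$; its method is precisely to \emph{avoid} treating the potential as a nonlocal functional of the graph. The idea, which the present paper follows verbatim, is to enlarge the unknown: one harmonically extends the graph function $f$ to the upper half-ball, obtaining $\varphi$, and introduces two new unknowns $v_\pm := v_E\circ\Phi^\pm$, where $\Phi^\pm$ are analytic diffeomorphisms (built from $\varphi$) sending the half-ball into $E$ and into $\R^n\setminus E$ respectively. Because $-\Delta v_E=\chi_E$ is a \emph{local} equation on each side of $\partial E$, the triple $(v_+,v_-,\varphi)$ satisfies a \emph{local} elliptic system with analytic coefficients in $B_r^+$, together with three boundary conditions on $\{x_n=0\}$ coming from the Euler equation, continuity of $v_E$, and continuity of $\nabla v_E$ across $\partial E$. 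One then checks the Lopatinski--Shapiro (complementing) condition for this system and invokes Morrey's classical analytic regularity theorem for elliptic systems. That verification---showing the linearized boundary system has no nontrivial bounded exponential solutions---is the actual content of the proof.

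Your outline, by contrast, keeps the problem as a scalar nonlocal equation $H(u)+2\gamma W(u)=\lambda$ and asserts analyticity of $u\mapsto W(u)$ between H\"older spaces. Two issues: first, this analyticity is not in \cite{KLM} and would itself be a substantial result (the near-field piece is a singular integral whose dependence on $u$ is far from transparently analytic); second, even granting it, neither of your proposed conclusions follows without further work. The analytic implicit function theorem gives analytic dependence on \emph{parameters}, not analyticity in $x'$ of a fixed solution; and Morrey--Friedman majorant estimates are formulated for \emph{local} operators $F(x,u,Du,D^2u)$, so your appeal to them in the presence of a genuinely nonlocal $W(u)$ is unjustified. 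The observation you close with---that $v_E$ is not $C^2$ across $\partial E$ yet its trace must be analytic---is exactly right, and is exactly why the paper splits $v_E$ into $v_+$ and $v_-$, each of which \emph{is} smooth up to the interface.
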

We note that the above result holds also for critical sets of the Ohta-Kawasaki functional \eqref{energy2}. For the minimizers of \eqref{energy} and \eqref{energy2} we obtain that they are analytic up to a singular set which Hausdorff dimension is at most  $n-8$. Theorem \ref{regularity} can also be applied to 
 improve  the partial regularity result for general non-smooth critical sets in  \cite{GoVo}.

Our second result concerns the uniqueness of stable critical sets.  The study of critical sets is   mathematically interesting for two reasons.  First, it is closely related to the stability of the Alexandroff theorem on sets of constant mean curvatures, since in the case  $\gamma=0$ we know by Alexandroff theorem that the only connected critical  sets  are balls. Second, if we do not have any constraint on $\gamma$ in \eqref{energy} then the family of  possible critical sets is much richer than the family of minimizers. Indeed, it easy to see that for large enough $\gamma$ an annulus is a critical set (see \cite{Go}). More interesting examples of critical sets which are  diffeomorphic to the torus are  constructed in \cite{RW2011}. 

In the planar case the functional \eqref{energy} does not have a minimizer due to the logarithmic behavior of the potential. However, it is showed in \cite{Go} that in the plane when $\gamma$ is small the disk is the unique critical set of  \eqref{energy}.   We would like to have a similar result in higher dimension, but this turns out to be  challenging due to the fact that in higher dimensions there is no stability of the Alexandroff theorem. In \cite{Bu, BuMa}  it is  constructed a sequence of smooth sets with uniformly bounded perimeters which mean curvatures converges uniformly to a constant, but the sets do not converge to a ball. This shows that there cannot be stability of the Alexandroff theorem without further assumptions on the sets (see \cite{CV}). The question of uniqueness of critical sets in higher dimension seems therefore to be  a rather delicate issue. 

Here we prove a weaker result and show that when $\gamma$ is small  the ball is the only possible \emph{stable} critical set. A critical set is stable if the second variation is positive semidefinite (Definition \ref{stable}).   
\begin{theorem} \label{uniqueness}
Let $n\geq 3$ and $L>0$. There exists  $\gamma_0 = \gamma_0(n,L) >0$ such that  if $E \subset \R^n$ with $|E| = |B_1|$ is  a smooth stable critical set of  \eqref{energy} with  $\gamma \leq \gamma_0$ and  $P(E)\leq L$, then $E$ is a ball.
\end{theorem}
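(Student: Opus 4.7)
\noindent\emph{Plan.} I argue by contradiction: suppose there are $\gamma_k\to 0^+$ and smooth stable critical sets $E_k$ of \eqref{energy} (with parameter $\gamma_k$) satisfying $|E_k|=|B_1|$ and $P(E_k)\le L$, none of which is a ball. The aim is to show, after translating, that $E_k\to B_1$ in $C^{2,\alpha}$ and then to apply the implicit function theorem to the Euler equation \eqref{euler} to force $E_k=B_1$ for large $k$. First, since $|E_k|$ is fixed, the potential $v_{E_k}$ is uniformly bounded in $L^\infty(\R^n)$, and a uniform bound on the Lagrange multiplier $\lambda_k$ follows by testing \eqref{euler} against the position vector field (Minkowski-type identity, using $P(E_k)\le L$ and $|E_k|=|B_1|$). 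Hence $\|H_{E_k}\|_{L^\infty}$ is uniformly bounded; combined with $P(E_k)\le L$ and density estimates for bounded-mean-curvature varifolds, a translation confines $E_k$ to a fixed ball, and BV compactness yields $\chi_{E_k}\to\chi_{E^*}$ in $L^1$ with $|E^*|=|B_1|$ and $P(E^*)\le L$. Passing to the limit in \eqref{euler}, using $\gamma_k v_{E_k}\to 0$, shows that $E^*$ has constant distributional mean curvature, so $E^*$ is smooth by classical regularity, and by Alexandroff each connected component is a round ball. Allard's theorem and Schauder bootstrapping in \eqref{euler} then upgrade the boundary convergence to $C^{2,\alpha}$.

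\noindent\emph{Ruling out multiple components via stability.} If $E^*=\bigsqcup_{j=1}^N B(x_j,r)$ with $N\ge 2$ (the common radius $r$ is forced by constancy of $H_{E^*}$), then for large $k$ the boundary $\partial E_k$ decomposes into $N$ components, each a $C^{2,\alpha}$-normal graph over a sphere of $E^*$. Choose $\phi_k$ locally constant on $\partial E_k$ with values $a_1,\dots,a_N$ tuned so that $\int_{\partial E_k}\phi_k\,d\Ha^{n-1}=0$. Then the tangential gradient vanishes, so $\int|\nabla_\tau\phi_k|^2\,d\Ha^{n-1}=0$, while $\int|B_{E_k}|^2\phi_k^2\,d\Ha^{n-1}\to\frac{n-1}{r^2}\sum_j a_j^2 P(B(x_j,r))>0$; the nonlocal contribution to the second variation at $E_k$ is $O(\gamma_k)$. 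Therefore the second variation evaluated on $\phi_k$ becomes strictly negative for large $k$, contradicting the stability of $E_k$. Hence $E^*$ has a single connected component, and $|E^*|=|B_1|$ forces $E^*=B_1$ after translation.

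\noindent\emph{Conclusion via the implicit function theorem.} Writing $\partial E_k=\{(1+u_k(\omega))\omega:\omega\in S^{n-1}\}$ with $\|u_k\|_{C^{2,\alpha}(S^{n-1})}\to 0$, translate $E_k$ once more so that $\int_{S^{n-1}}u_k(\omega)\,\omega\,d\omega=0$, thereby factoring out the translation kernel. Consider the map $(u,\gamma)\mapsto H(u)+2\gamma v(u)$, post-composed with subtraction of its spherical average to absorb $\lambda$; this is smooth from a $C^{2,\alpha}$-neighbourhood of $0$ in the subspace of functions on $S^{n-1}$ orthogonal to constants and to the first spherical harmonics, cross a neighbourhood of $\gamma=0$, into the mean-zero subspace of $C^{0,\alpha}(S^{n-1})$. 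Its differential at $(0,0)$ is the Jacobi operator $-\Delta_{S^{n-1}}-(n-1)$, whose kernel on the unconstrained space is spanned by the linear functions $\omega\mapsto e\cdot\omega$; on the centred subspace it is thus an isomorphism by the spectral theory of spherical harmonics. Since the ball is critical for every $\gamma$, $u\equiv 0$ is a branch of solutions, and local uniqueness from the implicit function theorem gives $u_k\equiv 0$ for large $k$. This contradicts the assumption that $E_k$ is not a ball.

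\noindent\emph{Main obstacle.} The delicate step is upgrading the $L^1$-compactness of the first paragraph to $C^{2,\alpha}$ with enough control for the subsequent arguments. In particular, the stability argument requires that, for large $k$, $\partial E_k$ itself decomposes into connected components lying close to the separate spheres of $\partial E^*$, so that truly locally-constant test functions are available; ruling out thin necks connecting the limiting balls must be deduced from the uniform $L^\infty$-bound on $H_{E_k}$ and Allard-type no-neck results. Once this strong compactness is in hand, both the stability-based exclusion of disconnection and the implicit function step are essentially routine.
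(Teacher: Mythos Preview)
Your outline correctly identifies its own fatal gap in the ``Main obstacle'' paragraph, and this gap is genuine and cannot be repaired along the lines you suggest. The limit set $E^*$ coming out of Ciraolo--Maggi is a union of \emph{tangent} balls, not balls at positive distance from one another; moreover, one can show (and the paper does, using exactly your locally-constant test functions) that $\partial E_k$ is \emph{connected} for $k$ large. Thus your sentence ``for large $k$ the boundary $\partial E_k$ decomposes into $N$ components'' is false in the situation at hand, and no locally constant $\phi_k$ with zero average is available. Your hope that ``Allard-type no-neck results'' would rule out thin necks from the $L^\infty$ bound on $H_{E_k}$ alone is unfounded: compact connected hypersurfaces with exactly constant mean curvature and arbitrarily thin Delaunay-type necks joining nearly-spherical lobes exist in $\R^n$ (Kapouleas, Butscher), and these are precisely the obstruction the paper's introduction flags when it says there is no stability of the Alexandroff theorem. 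At the tangent points the excess is of order one, so Allard gives no graphicality there.

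This is exactly the step where the paper does real work, and it is handled by arguments that differ substantially from yours and are dimension-dependent. In $n=3$, one tests stability with $\varphi=x\cdot\nu-\sigma$ (Barbosa--do~Carmo/Wente), which is legitimate on a connected surface, and after using the Euler equation one extracts $\int_{\partial E_k}(\kappa_1-\kappa_2)^2\le C\gamma_k$; near-umbilicity forces genus zero and, via Gauss--Bonnet, $\int|B_{E_k}|^2\le 8\pi+C\gamma_k$, which is incompatible with a limit of $N\ge 2$ tangent spheres. In $n\ge 4$, one builds a Lipschitz test function that is $\pm 1$ on the two lobes and transitions across the neck region of size $\delta$; the gradient energy over the neck scales like $\delta^{n-3}$, which beats the negative curvature term for $\delta$ small precisely when $n\ge 4$, again contradicting stability in the limit. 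Only after $N=1$ is established does the paper proceed, as you do, to $W^{2,p}$-closeness to $B_1$ and a second-variation/implicit-function argument to conclude.

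Your compactness paragraph and your final implicit-function step are essentially in line with the paper (the paper packages the latter as Lemma~\ref{no other critical}). The missing ingredient is entirely the treatment of the tangent-ball configuration with connected $\partial E_k$.
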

The proof of Theorem \ref{uniqueness} is based on the result in \cite{CM}, where it was showed that if a smooth set with uniformly bounded perimeter has almost constant  mean curvature then it it close to a  union of  disjoint tangential balls. We use this result and the stability assumption  to conclude that  
 when $\gamma$ is small  $E$ is actually close to a single ball. The argument is different in the case $n=3$ than in $n\geq 4$. In $n=3$ we use the stability similarly as in  \cite{BdC, Wen} to conclude that $E$ is almost umbilical and therefore it has to be close to a single ball  by  standard estimates for the Willmore functional.   
In the other case  we use an argument similar to  \cite{StZ}  to conclude that  $\partial E$ satisfies a Poincar\'e type inequality which in the case $n\geq 4$ is strong enough to imply 
 that  $E$ is  close to a single ball. Once we have showed that $E$ is close to a single ball, the result follows from the fact that the second variation of the ball is strictly positive and therefore there cannot   be  other critical sets nearby.

\section{Preliminaries}

We first  recall the definition of sets of finite perimeter.  For an introduction to the topic we refer to \cite{Ma}. A  measurable set $E$ has \emph{finite perimeter} if 
\[
P(E) = \sup \Big\{ \int_E \text{div} \varphi\, dx \,\, : \,\, \varphi \in C_0^\infty(\R^n; \R^n), \, \sup|\varphi|\leq 1 \Big\} < \infty.
\]
The quantity $P(E)$ is the perimeter of $E$. If $E$ is a set of finite perimeter then its \emph{reduced boundary} is denoted by $\partial^* E$ and  its perimeter in  $U \subset \R^n$ is
\[
P(E; A) := \Ha^{n-1}(\partial^* E \cap A).
\]
For a Lipschitz set the reduced boundary  agrees with the topological boundary. 

For a sufficiently regular set $E$ we denote by $\nu_E$ the exterior normal of $E$. When no confusion arises we write simply  $\nu$. Given a vector field $X \in C^1(\partial E;\R^n)$ we may extend it to $\R^n$.   We denote  its tangential part on $\partial E$ as $X_{\tau} := X - (X\cdot \nu_E) \nu_E$. It is clear that $X_{\tau}$ does not depend on the extension of $X$. In particular,  we denote by $D_{\tau}$ the tangential gradient operator on $\partial E$ given by $D_{\tau}\vphi := (D\vphi)_\tau$.  Similarly $\text{div}_{\tau}$ and $\Delta_\tau$ denote the tangential divergence and  Laplace-Beltrami operator  respectively and they are defined as $\text{div}_{\tau} X := \text{div}X -  (DX  \nu)\cdot  \nu$ and $\Delta_\tau \vphi := \text{div}_{\tau}(D_\tau \vphi)$. Finally  the \emph{mean curvature} of $E$ is the sum of the principle curvatures which can also be written as $H_E:= \text{div}_{\tau}(\nu_E)$.

Next we define the first and the second variation of \eqref{energy} for a sufficiently regular set $E$. We say that a one-parameter family of diffeomorphisms $\Phi_t :\R^n \to \R^n$, with $t \in (-1,1)$, is admissible flow if 
$\Phi_0(x) = x$ and $|\Phi_t(E)|= |E|$ for all $t$. Let $X$ be the vector field associated with $\Phi_t$, i.e., 
\[
\frac{\partial}{\partial t} \Phi_t = X(\Phi_t).
\] 
Note that the vector field associated with an admissible flow satisfies
\begin{equation} \label{zero average}
\frac{\partial}{\partial t} |\Phi_t(E)| \big|_{t=0} = \int_{\partial E} X \cdot \nu \, d\Ha^{n-1} = 0.
\end{equation}
The first variation of the functional \eqref{energy} is
\[
\frac{\partial}{\partial t} J(\Phi_t(E)) \big|_{t=0} = \int_{\partial E} \text{div}_{\tau} X \,   d \Ha^{n-1}  + 2 \gamma  \int_{\partial E} v_E (X \cdot \nu_E) \,   d \Ha^{n-1},
\]
where the potenial $v_E$ is defined in \eqref{potential}. If $E$ is of class $C^2$ then we may write the first variation as 
\[
\frac{\partial}{\partial t} J(\Phi_t(E)) \big|_{t=0} = \int_{\partial E} (H_E + 2 \gamma   v_E) \,  X \cdot \nu_E \,   d \Ha^{n-1}.
\]
Therefore recalling \eqref{zero average} we define the critical sets of \eqref{energy} as follows.
\begin{definition}
\label{critical}
Let   $E$ be a $C^2$-regular set.  We say that $E$ is a regular critical set if it satisfies the Euler equation \eqref{euler} for some $\lambda \in \R$. The Lagrange multiplier $\lambda$ 
is due to the volume constraint. 
\end{definition}

The second variation of the functional at a general, not necessarily critical, set $E$ is (see \cite{BC})
\[
\begin{split}
\frac{\partial^2}{\partial t^2} J(\Phi_t(E)) \big|_{t=0} = &\int_{\partial E} |D_\tau (X \cdot \nu_E)|^2 - |B_E| (X \cdot \nu_E)^2 + 2 \gamma \partial_\nu v_E (X \cdot \nu_E)^2 \, d\Ha^{n-1} \\
&+ \frac{2\gamma}{n(n-2) \omega_n}  \int_{\partial E} \int_{\partial E}\frac{(X \cdot \nu_E)(x)(X \cdot \nu_E)(y)}{|x-y|^{n-2}} \, d\Ha^{n-1}(x) d\Ha^{n-1}(y)\\
&+\int_{\partial E} (H_E + 2 \gamma v_E) (\text{div} X) (X \cdot \nu_E)d\Ha^{n-1}\\
&- \int_{\partial E} (H_E + 2 \gamma v_E) \text{div}_\tau ( X_\tau (X \cdot \nu_E))d\Ha^{n-1}
\end{split}
\]
If $E$ is a critical set (Definition \ref{critical}) then the two last terms vanish. Indeed this follows from the fact that since the flow is admissible then (see \cite{CS})
\[
0 = \frac{\partial^2}{\partial t^2} |\Phi_t(E)| \big|_{t=0} = \int_{\partial E} (\text{div} X) (X \cdot \nu_E) \, d\Ha^{n-1} = 0.
\]
This leads us to define the  following quadratic form associated with the second variation,
\[
\begin{split}
\partial^2 J(E)[\vphi] := &\int_{\partial E} |D_\tau \vphi|^2 - |B_E| \vphi^2 + 2 \gamma \partial_\nu v_E \vphi^2 \, d\Ha^{n-1} \\
&+ \frac{2\gamma}{n(n-2) \omega_n}  \int_{\partial E} \int_{\partial E}\frac{\vphi(x)\vphi(y)}{|x-y|^{n-2}} \, d\Ha^{n-1}(x) d\Ha^{n-1}(y),
\end{split}
\]
for $\vphi \in H^1(\partial E)$ with $\int_{\partial E} \vphi \, d\Ha^{n-1} = 0$.

\begin{definition}
\label{stable}
Let  $E$ be a regular critical set.  We say that $E$ is a stable if  
\[
\partial^2 J(E)[\vphi] \geq 0
\]
for all $\vphi \in H^1(\partial E)$ with $\int_{\partial E} \vphi \, d\Ha^{n-1} = 0$.
\end{definition}

Finally we need the following simplified version of the Allard's regularity theorem \cite{All}. To this aim for a set of finite perimeter $E \subset \R^n$ and for  $x \in \partial^* E$ and $r>0$ we measure
 the  excess of $E$ at $x$ by
\begin{equation} \label{excess}
\sigma(E,x,r) := \frac{1}{r^{n-1}}\Bigl| P(E, B_r(x)) - \omega_{n-1} r^{n-1} \Bigl|.  
\end{equation}
The following theorem can be found in \cite{All, Sim}
\begin{theorem}[Allard]
\label{allard}
Assume that $E \subset \R^n$ is a set of finite perimeter with bounded mean curvature $|| H_E||_{L^\infty} \leq C$ and let $\alpha  \in (0,1)$.  There exists $\sigma(\alpha)>0$ such that  if  $x \in \partial^* E$ and $r \in (0,1)$ are such that 
\[
\sigma(E,x,r) < \sigma(\alpha)
\]
then $\partial^* E \cap B_{r/2}(x)$ is a $C^{1,\alpha}$-hypersurface. Moreover  up to a rotation  we may write 
\[
\partial^* E \cap  B_{r/2}(x) \subset \{ y' + u(y')e_n \mid y' \in B_{r}^{n-1}(x') \},
\] 
where $x = (x',x_n) \in \R^{n-1} \times \R$, and we have the following estimate
\[
||u||_{C^{1,\alpha}(B_{r/2}^{n-1}(x'))} \leq C \sigma(E,x,r)^{\frac{1}{4(n-1)}}.
\] 
\end{theorem}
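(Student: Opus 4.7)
The plan is to follow the classical De Giorgi--Allard scheme in codimension one: show that small excess together with a controlled mean curvature forces $\partial^* E$ to flatten at every smaller scale, and then upgrade this geometric decay to $C^{1,\alpha}$-regularity of a graph representation via a Campanato-type characterization. The bound $\|H_E\|_{L^\infty}\le C$ will enter throughout as a lower-order forcing term that does not interfere with the main iteration, provided all scales considered are $\le 1$.

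First I would use the monotonicity formula for sets of finite perimeter with bounded generalized mean curvature to turn $\sigma(E,x,r)<\sigma(\alpha)$ into density and height information. Comparing $P(E,B_\rho(x))/\rho^{n-1}$ at nearby scales yields, throughout $B_{r/2}(x)$, a density very close to $\omega_{n-1}/2$, and a coarea argument then produces a tilt-excess estimate
\[
\frac{1}{r^{n+1}}\int_{\partial^* E\cap B_r(x)} \mathrm{dist}(y,\pi)^2\, d\Ha^{n-1}(y) \le C\bigl(\sigma(E,x,r)+r^2\bigr)
\]
for some hyperplane $\pi$ through $x$. This trapping of the reduced boundary in a thin slab is the input for the De Giorgi Lipschitz approximation lemma: up to an exceptional set of $\Ha^{n-1}$-measure $o(r^{n-1})$, $\partial^* E$ coincides with the graph of a Lipschitz function $u$ over $\pi$, with $\|u\|_\infty$ and $\mathrm{Lip}(u)$ controlled by a positive power of $\sigma(E,x,r)$.

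The third ingredient is harmonic approximation. On the graphical part, $u$ satisfies a perturbed minimal surface equation with right-hand side bounded by $\|H_E\|_{L^\infty}$, so a compactness/blow-up argument (contradiction, using Allard's density estimates to dispose of the non-graphical exceptional set) shows that $u$ is $L^2$-close on $B_{r/4}^{n-1}(x')$ to a harmonic function $h$, with error at most $C(\sigma(E,x,r)+r^2)$. Since harmonic functions have sharp excess decay, this yields the one-step improvement
\[
\sigma(E,x,\theta r) \le \tfrac{1}{2}\theta^{2\alpha}\sigma(E,x,r) + C\theta^{2\alpha} r^{2\alpha}
\]
for a suitably small $\theta\in(0,1/2)$. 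Iterating along the geometric sequence of radii $\theta^k r$ produces a Campanato-type decay of $\sigma(E,\cdot,\rho)$, which translates into $C^{1,\alpha}$-regularity of $u$ and into the quantitative bound $\|u\|_{C^{1,\alpha}}\le C\,\sigma(E,x,r)^{1/(4(n-1))}$; the exact exponent $1/(4(n-1))$ arises from carefully tracking the losses in passing from integral excess to tilt-excess and then to $L^\infty$-height through the Lipschitz approximation. The main obstacle, and the source of the threshold $\sigma(\alpha)$, is precisely the harmonic approximation lemma: one must ensure that the non-graphical exceptional set is negligible in $L^2$ rather than $L^\infty$, and that the mean-curvature forcing term genuinely vanishes in the blow-up so that the limit is a classical harmonic function.
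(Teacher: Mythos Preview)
The paper does not prove this theorem at all: it is stated as a known result and attributed to the references \cite{All, Sim} (Allard's original paper and Simon's lecture notes), and is then used as a black box in the proof of Theorem~\ref{uniqueness}. So there is no ``paper's own proof'' to compare your proposal against.

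That said, your outline follows the standard De Giorgi--Allard scheme (monotonicity to control density and height, Lipschitz approximation, harmonic approximation and excess decay, iteration to Campanato regularity) and is a reasonable high-level sketch of how one would actually prove such a statement. If you intend to include a proof, you should be aware that the specific quantitative form stated here---in particular the exponent $1/(4(n-1))$ in the $C^{1,\alpha}$ bound---is not derived in the paper but simply quoted, so you would need to track constants through the Lipschitz approximation and Campanato steps carefully to recover exactly that power; your parenthetical remark about ``tracking the losses'' is the right place to look, but it is not obvious from your sketch alone that this precise exponent emerges. For the purposes of the present paper, however, a citation suffices.
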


\section{Regularity of critical points.}

In this section we prove Theorem \ref{regularity}. As mentioned in the introduction the result follows from the argument developed in \cite{KLM}.

\begin{proof}[\textbf{Proof of Theorem \ref{regularity}}]
Let us fix a point on the boundary $\partial E$ which we may assume to be the origin. By assumption,  $\partial E$ is a $C^2$-hypersurface and therefore it follows from the result in  \cite{JP} that $\partial E$ is  $C^\infty$-regular.  Hence we may assume that there is a radius $r$ and a smooth function
$f: \R^{n-1} \to \R$ such that for $\Gamma = \partial E \cap B_r$ it holds
\[
\Gamma \subset \{ x = (x',x_n) \in \R^{n-1} \times \R \mid x_n = f(x') \}
\]
and that $E$ lies above $\Gamma$.  Moreover, by possible rotating the set, we may assume that $\nabla f(0) = 0$.  

We follow closely the argument from \cite{KLM}. We denote by $B_r^+\subset \R^n$ the upper half ball. Let $\varphi : B^+(0,r) \to \R$ be the solution of 
\[
\begin{cases}
			-\Delta \varphi = 0 , &\text{ in } \,  B_r^+\\
			\varphi(x',0) =  f(x') & \text{ on } \, B_r \cap \{x_n = 0\}.
		\end{cases}
\] 
Since $f$ is smooth then $\varphi$ is smooth up to the boundary. Denote $M :=\sup_{B_r^+} |\nabla \varphi|$. Define  $\Phi^{\pm}: B_r^+ \to \R^n$  by 
\[
\Phi^{+}(x) := (x', \varphi(x) + (M+1)x_n) \qquad \text{and} \qquad \Phi^{-}(x) := (x', \varphi(x) - (M+1)x_n).
\]
Since $\text{det}(D\Phi^{\pm}) = \partial_{x_n}\varphi \pm (M+1) \neq 0$ we may assume, by possibly decreasing $r$, that both $\Phi^+$ and $\Phi^-$ are invertible and denote 
their inverses by $\Psi^+$ and $\Psi^-$. Note also that when $r$ is small it holds $\Phi^+(B_r^+) \subset E$  and $\Phi^-(B_r^+) \subset \R^n \setminus E$.  We define $v_+, v_- : B_r^+ \to \R$ by
\[
v_+(x) :=  v_E(\Phi^+(x)) \qquad \text{and} \qquad v_-(x) :=  v_E(\Phi^-(x)).
\]
Since $\partial E$ is smooth, it follows that the restiction  of $v_E$ to $E$ is smooth up to the boundary  $\partial E$. Similarly
the restriction of $v_E$ to $\R \setminus E$ is smooth up to the boundary. Therefore  we deduce that  $v_+, v_- \in C^\infty(\bar{B}_r^+)$. 

Recall that  $v_E$ is a solution of
\begin{equation} \label{poisson}
- \Delta v_E = \chi_E.
\end{equation}
Let us denote the matrix   $A^\pm(\nabla \varphi) := D\Psi^\pm(\Phi^\pm(x))$ and the vector  $b^\pm(\nabla \varphi) := (\Delta \Psi^\pm)(\Phi^\pm(x))$, where the latter  is a map which coordinate functions
are $\Delta \Psi^\pm_k(\Phi^\pm(x))$.  We conclude from \eqref{poisson} that   $(v_+, v_-, \varphi)$ satisfy the following system of equations in $B_r^+$,
\begin{align*} 
 \text{Trace}\left( A^+(\nabla \varphi)^T D^2 v_+ A^+(\nabla \varphi) \right) + \la b^+(\nabla \varphi),  \nabla v_+ \ra &= 1,  \\
 \text{Trace}\left( A^-(\nabla \varphi)^T D^2 v_- A^-(\nabla \varphi) \right)  + \la b^-(\nabla \varphi), \nabla v_- \ra&=  0,\\
\Delta \varphi &= 0.
\end{align*}
Moreover we deduce from  \eqref{poisson} and from  standard Calderon-Zygmund estimates that  $u \in C^{1,\alpha}(\R^n)$ for every $0 < \alpha< 1$. In particular, this implies that 
\[
v_+ = v_- \qquad \text{and} \qquad  \la  A^+(\nabla \varphi)^T \nabla v_+, \eta(x) \ra = \la  A^-(\nabla \varphi)^T \nabla v_-, \eta(x) \ra 
\] 
on $ B_r \cap \{x_n = 0\}$, where 
\[
\eta(x) =\frac{(-\nabla_{x'}f(x), 1)}{\sqrt{1 + |\nabla_{x'}f|^2}} = \frac{(-\nabla_{x'}\varphi(x), 1)}{\sqrt{1 + |\nabla_{x'}\varphi|^2}}
\]
is the interior normal of $E$. Hence we conclude from the Euler equation \eqref{euler} that   $(v_+, v_-, \varphi)$ satisfy the following boundary conditions on $ B_r \cap \{x_n = 0\}$
\begin{align*} 
\frac{1}{\sqrt{1 + |\nabla_{x'}\varphi|^2}} \left(\Delta_{x'} \varphi - \frac{1}{1 + |\nabla_{x'}\varphi|^2} \la D_{x'}^2 \varphi \nabla_{x'} \varphi, \nabla_{x'} \varphi \ra \right) + 2 \gamma v_+ &= \lambda,\\
A^+(\nabla \varphi)^T \nabla v_+, \eta(x) \ra - \la  A^-(\nabla \varphi)^T \nabla v_-, \eta(x) \ra  &=  0,  \\
v_+ - v_-  &= 0.
\end{align*}

We are now in a position to use classical regularity results (see \cite{M} or \cite[Theorem 2.2]{KLM}) to conclude that $v_+, v_-$ and $\varphi$ are analytic in $\bar{B}_r^+$. Indeed, the functions $v_+, v_- , \varphi$ are smooth up to the boundary $ B_r \cap \{x_n = 0\}$. Moreover the above system of equations  is clearly elliptic and analytic and the boundary conditions  are analytic. Therefore we need only to show that the boundary conditions are complementing at the origin. 

To this aim we recall that we assumed $\nabla_{x'} \varphi(0) = \nabla_{x'} f(0) = 0$. Therefore $\eta(0) =e_n$ and it holds   
\[
 A^+(\nabla \varphi(0)) =  \begin{pmatrix}
 I & 0 \\
0 & \mu_1 
\end{pmatrix} \qquad \text{and} \qquad A^-(\nabla \varphi(0)) =  \begin{pmatrix}
 I & 0 \\
0 & - \mu_2 
\end{pmatrix}
\]
for some  positive numbers  $\mu_1, \mu_2>0$. Here $I$ denotes the $(n-1)$ by $(n-1)$ unit matrix.  Therefore in order to check  that the boundary conditions are complementing at the origin, we need to show that the linearized system
\begin{align*} 
 \sum_{i=1}^{n-1} \partial_{x_ix_i} u_1 + \mu_1^2  \partial_{x_nx_n} u_1 &= 0,  \\
 \sum_{i=1}^{n-1} \partial_{x_ix_i} u_2 + \mu_2^2  \partial_{x_nx_n} u_2 &= 0,\\
\Delta u_3 &= 0
\end{align*}
 in $\R_+^n = \{ x \in \R^n \, : \, x_n > 0\}$, with the following  boundary conditions on $\{x_n = 0\}$,
\begin{align*} 
\Delta_{x'} u_3 &= 0,\\
  \mu_1 \partial_{x_n} u_1 +  \mu_2 \partial_{x_n} u_2 &=  0,  \\
u_1 - u_2  &= 0
\end{align*}
does not have a nontrivial bounded exponential solution of the form
\[
u_j(x) = e^{i \xi \cdot x} \phi_j(x_n)
\]
where $\xi \neq 0$ is orthogonal to $e_n$.

We argue by contradiction and assume that such a solution exists. It follows from the equation for $u_3$   that
\[
\phi_3''(x_n) - |\xi|^2\phi_3(x_n) = 0.
\]
Since we assumed $u_3$, and thus $\phi_3$, to be bounded, we deduce from the above equation that there is a constant $a_3$ such that $\phi_3(t)= a_3 e^{-|\xi|t}$. 
The boundary condition for $u_3$ yields $|\xi|^2\phi_3(0) = 0$, which implies $\phi_3 \equiv 0$.

The equations for $u_1$ and $u_2$ give
\[
\mu_1^2  \phi_1''(x_n) - |\xi|^2 \phi_1(x_n) = 0 \qquad \text{and}\qquad \mu_2^2  \phi_2''(x_n) - |\xi|^2 \phi_2(x_n) = 0.
\]
Since $u_1$ and $u_2$ are bounded, the above equations imply that   there are constants $a_1,a_2$ such that
\[
\phi_1(t) = a_1 e^{-\frac{|\xi|}{\mu_1}t} \qquad \text{and}\qquad \phi_2(t) = a_2 e^{-\frac{|\xi|}{\mu_2}t}.
 \]
The boundary conditions for  $u_1$ and $u_2$ can be written as
\[
\mu_1 \phi_1'(0) =-\mu_2 \phi_2'(0)  \qquad \text{and}\qquad \phi_1(0) = \phi_2(0).
\]
These imply $a_1 = a_2 = 0$, which means that $u_1$ and $u_2$ are zero. Hence the boundary conditions are complementing and the claim follows from result in \cite{M}  (see also \cite[Theorem 2.2]{KLM}).
\end{proof}

\section{Uniqueness of stable critical points.}

Throughout the section we assume  the dimension to be  higher than two, i.e., $n \geq 3$. We begin with two lemmas.  First we show that when $\gamma$ is small then balls are the only  critical points near the unit ball.  This follows from the fact that when $\gamma$ is small  the second variation of the unit ball is strictly positive. 
\begin{lemma} \label{no other critical}
Let $p >n$. There is $\delta_0>0$ such that if $E$ is a regular critical set with $|E| = |B_1|$  such that  
\[
\partial E = \{ \psi(x)x + x \, \mid x \in \partial B_1  \} \qquad \text{and} \qquad ||\psi||_{W^{2,p}}\leq \delta_0,
\]
then $E = B_1(y)$ for some $y \in B_{\delta}(0)$.
\end{lemma}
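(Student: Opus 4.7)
The plan is to apply the implicit function theorem in a Lyapunov--Schmidt form, with the main subtlety being the $n$-dimensional kernel of the linearization coming from translation invariance of $J$. Parametrize nearby sets as normal graphs over $\partial B_1$: for $\psi \in W^{2,p}(\partial B_1)$ small, let $E_\psi$ be the set with boundary $\{(1+\psi(x))x : x \in \partial B_1\}$, and denote by $H(\psi)$ and $V(\psi)$ the pullbacks to $\partial B_1$ of $H_{E_\psi}$ and $v_{E_\psi}$. The critical set equation \eqref{euler} coupled with $|E_\psi| = |B_1|$ becomes
\[
F(\psi,\gamma) := H(\psi) + 2\gamma V(\psi) - c(\psi,\gamma) = 0
\]
in $L^p_0(\partial B_1)$ (the mean-free $L^p$ functions), where $\psi$ ranges in the codimension-one submanifold $\mathcal{V} = \{|E_\psi|=|B_1|\}$ and $c(\psi,\gamma)$ is the mean value. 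Using $p > n$ and Calder\'on--Zygmund estimates on \eqref{poisson}, $F$ is a $C^1$ map in a neighborhood of $(0,0)$.

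Linearizing at $\psi=0$ gives $L := D_\psi F(0,0) = -\Delta_{\partial B_1} - (n-1)$ acting on the tangent space $T_0\mathcal{V}$ of mean-free $W^{2,p}$ functions. Its eigenvalues on spherical harmonics of degree $k$ are $k(k+n-2)-(n-1)$, so the kernel $K$ equals the $n$-dimensional space of degree-one harmonics $\{x\mapsto a\cdot x : a\in\R^n\}$, while $L$ is a self-adjoint isomorphism between the degree-$\ge 2$ orthogonal complement $K^\perp \subset T_0\mathcal{V}$ and $K^\perp\cap L^p_0$. The kernel $K$ is exactly tangent to the translation orbit of $B_1$: for $y\in\R^n$ small, $B_1(y)$ has volume $|B_1|$ and is a critical set of $J$ for every $\gamma$ (its mean curvature is $n-1$, and by spherical symmetry $v_{B_1(y)}$ is constant on $\partial B_1(y)$), and written as a graph $E_{\psi_y}$ over $\partial B_1$ satisfies $\psi_y(x) = y\cdot x + O(|y|^2)$ in $W^{2,p}$; in particular $y\mapsto (\psi_y)_K$ is a local diffeomorphism $\R^n \to K$ near $0$.

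Decompose $T_0\mathcal{V} = K \oplus K^\perp$, write $\psi = \psi_K + \psi_\perp$, and split $F = P_K F + P_{K^\perp}F$ via the $L^2$-orthogonal projection onto $K$. Since $D_{\psi_\perp}(P_{K^\perp}F)(0,0) = L|_{K^\perp}$ is an isomorphism $K^\perp \to K^\perp\cap L^p_0$, the implicit function theorem produces a unique $C^1$ map $\Psi : (\psi_K,\gamma) \mapsto \psi_\perp \in K^\perp$ with $\Psi(0,0)=0$ solving $P_{K^\perp}F(\psi_K + \Psi(\psi_K,\gamma),\gamma) = 0$ for $(\psi_K,\gamma)$ in a neighborhood of $0$. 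The remaining bifurcation equation $B(\psi_K,\gamma) := P_K F(\psi_K + \Psi(\psi_K,\gamma),\gamma) = 0$ lives in $K \cong \R^n$. But the $n$-parameter family $\{\psi_y\}$ already solves $F=0$ and hence $B=0$, and since $y\mapsto (\psi_y)_K$ is a local diffeomorphism onto a neighborhood of $0\in K$, this family exhausts every small solution of the bifurcation equation. Uniqueness in the implicit function theorem then forces every critical set $E_\psi$ with $\|\psi\|_{W^{2,p}}\le \delta_0$ to coincide with $B_1(y)$ for some $y\in B_\delta(0)$.

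The main obstacle throughout is precisely the translational kernel $K$: a naive implicit function argument fails because $L$ has an $n$-dimensional kernel, and the implicit function theorem only recovers the normal perturbation $\psi_\perp$ modulo this kernel. The resolution is structural rather than analytic: translation invariance of $J$ ensures that translated balls form an honest $n$-parameter family of nearby critical sets filling out a full neighborhood of $0$ in $K$ under the projection $\psi\mapsto \psi_K$, so the bifurcation equation is automatically satisfied along this family and, by Lyapunov--Schmidt uniqueness, captures every nearby critical set.
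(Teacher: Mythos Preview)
Your approach is correct and genuinely different from the paper's. The paper does not carry out a Lyapunov--Schmidt reduction; instead it invokes the quantitative local minimality results of \cite{AFM, BC}: for $E$ close to $B_1$ in $W^{2,p}$ one can choose a translate $E+z$ and an admissible volume-preserving flow $\Phi_t$ from $B_1$ to $E+z$ along which $\tfrac{d^2}{dt^2}J(\Phi_t(B_1))\geq c\,|B_1\Delta(E+z)|^2$. Since $B_1$ is critical, the first derivative vanishes at $t=0$, so if $E$ were not a ball this uniform convexity would force $\tfrac{d}{dt}J|_{t=1}>0$, contradicting the criticality of $E$. That argument rests on the strict positivity of the second variation of $J$ at $B_1$ modulo translations, whereas yours uses only the Fredholm structure of the linearization together with the observation that translated balls already supply an $n$-parameter family of critical sets covering every kernel direction, so that the implicit-function uniqueness of $\psi_\perp$ forces any nearby critical set to coincide with one of them. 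Your route is self-contained (it does not import the machinery of \cite{AFM, BC}) and makes the role of translation invariance explicit; the paper's route is shorter once those results are taken as a black box and ties the isolatedness of $B_1$ directly to its strict stability as a critical point of $J$. Both arguments implicitly require $\gamma$ to lie in the range where $B_1$ has no Jacobi fields beyond the translational ones, which is exactly what is needed for the application in Theorem~\ref{uniqueness}.
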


\begin{proof}
The proof of this lemma can be  essentially found in \cite[Proof of Theorem~3.9]{AFM}  (see also \cite[Theorem  3.11]{BC}). From the results in \cite{AFM, BC} we conclude that  if $E$ is $W^{2,p}$-close to $B_1$ and $|E| = |B_1|$, then  we may find a point $z \in B_{\delta_0}(0)$  and a  vector field $X$ such that the associated flow $\Phi$, 
\[
\frac{\partial }{\partial t} \Phi_t = X(\Phi_t), \qquad \Phi_0(x)= x
\]
satisfies $|\Phi_t(B_1)| = |B_1|$ for every $t  \in [0,1]$,  $\Phi_1(B_1)= E + z$, and 
\[
\frac{d^2 }{dt^2}J(\Phi_t(B_1))\geq c |B_1 \Delta (E + z)|^2
\]
for all $t\in (0,1)$. Assume that $E$ is a regular critical set, which is not a ball. By the criticality of $B_1$ we have $\frac{d }{dt}J(\Phi_t(B_1)) |_{t=0} = 0$. Therefore since $E$ is not a translate of the unit ball
the above inequality gives $\frac{d }{dt}J(\Phi_t(B_1)) |_{t=1} >0$. This implies that $E + z$, and in turn $E$, is not a critical set, which is a contradiction. 
\end{proof}

We state another lemma in which we evaluate the nonlocal  terms in the second variation formula. 
\begin{lemma} \label{estimate second}
Let $E$ be as in the statement of Theorem \ref{uniqueness} and let $\gamma \leq 1$.  Assume  $\vphi \in H^1(\partial E) \cap L^\infty(\partial E)$ has zero average $\int_{\partial E} \vphi \, d\Ha^{n-1}= 0$. There exists a constant  $C$ which depends only  on the dimension and on $L$ such that 
\[
\int_{\partial E} |D_\tau \vphi|^2 - |B_E| \vphi^2  \, d\Ha^{n-1} \geq - C \gamma  ||\vphi||_{L^\infty}^2.
\]
\end{lemma}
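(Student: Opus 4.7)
The plan is to start from the stability inequality $\partial^2 J(E)[\vphi] \geq 0$, move the two nonlocal contributions to the right-hand side, and bound each of them in absolute value by $C(n,L)\gamma \|\vphi\|_{L^\infty}^2$. The zero-average assumption is used only to legitimize the stability inequality; otherwise it plays no role in the estimates.

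The term $2\gamma \int_{\partial E}\partial_\nu v_E\,\vphi^2\,d\Ha^{n-1}$ is the straightforward one: differentiating $v_E(x)=\int_E G(x,y)\,dy$ and applying the Riesz rearrangement inequality yields $\|\nabla v_E\|_{L^\infty(\R^n)} \leq (|E|/\omega_n)^{1/n} = 1$, so this piece is controlled by $\gamma P(E) \|\vphi\|_\infty^2 \leq L\gamma \|\vphi\|_\infty^2$. For the double integral, after pulling out $\|\vphi\|_\infty^2$, one must show
\[
\int_{\partial E}\int_{\partial E}\frac{d\Ha^{n-1}(x)\,d\Ha^{n-1}(y)}{|x-y|^{n-2}} \leq C(n,L),
\]
which is the heart of the argument.

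For this I would first bound the Lagrange multiplier. Combining the Minkowski-type identity $\int_{\partial E} H_E (y\cdot\nu_E)\,d\Ha^{n-1} = (n-1) P(E)$ with the Euler equation $H_E = \lambda - 2\gamma v_E$ and the divergence identity $\int_{\partial E} y\cdot\nu_E\,d\Ha^{n-1} = n|E|$, one gets
\[
n|E|\lambda = (n-1) P(E) + 2\gamma \int_{\partial E} v_E (y\cdot\nu_E)\,d\Ha^{n-1}.
\]
A Pohozaev-type identity for $v_E$, obtained by integrating $(y\cdot\nabla v_E)\Delta v_E$ over $\R^n$ (using $-\Delta v_E = \chi_E$ and the decay at infinity), reduces this boundary integral to $\frac{n+2}{2}\int_E v_E$, and $\int_E v_E \leq C(n)$ by Riesz rearrangement. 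Hence $|\lambda| \leq C(n,L)$, and the Euler equation then gives $\|H_E\|_{L^\infty} \leq C(n,L)$.

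With a uniform mean-curvature bound in hand, the monotonicity formula for hypersurfaces with bounded generalized mean curvature provides a density upper bound $\Ha^{n-1}(\partial E \cap B_r(x)) \leq C(n,L) r^{n-1}$ at scales depending on $n$ and $L$; coupled with the trivial bound $\Ha^{n-1}(\partial E\cap B_r(x))\leq P(E)\leq L$ at larger scales, a layer-cake computation controls $\int_{\partial E} |x-y|^{-(n-2)}\,d\Ha^{n-1}(y)$ uniformly in $x \in \partial E$, and a final integration closes the proof. The most delicate point is precisely this uniform density upper bound: pure $L^\infty$ control of $H_E$ is not by itself enough, and it is here that the smoothness of $\partial E$ together with the Euler-equation-driven curvature bound must be fed into Allard-type regularity to obtain a constant depending only on $n$ and $L$.
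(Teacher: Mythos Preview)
Your outline is correct and matches the paper's argument essentially step for step: start from the stability inequality, control the two $\gamma$-terms separately, bound $\lambda$ (the paper does this via the scaling identity $J((1+t)E)=(1+t)^{n-1}P(E)+(1+t)^{n+2}\gamma\,\mathrm{NL}(E)$, which is equivalent to your Minkowski/Pohozaev computation and yields the same formula $n|E|\lambda=(n-1)P(E)+(n+2)\gamma\,\mathrm{NL}(E)$), obtain $\|H_E\|_{L^\infty}\le C(n,L)$ from the Euler equation, and then use a dyadic/layer-cake decomposition to control $\int_{\partial E}|x-y|^{-(n-2)}\,d\Ha^{n-1}(y)$.

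Your one misstep is the final paragraph. You write that ``pure $L^\infty$ control of $H_E$ is not by itself enough'' for the uniform density upper bound and that Allard-type regularity must enter. This is not so: the monotonicity formula for integer rectifiable varifolds with $\|H\|_{L^\infty}\le C_0$ states that
\[
s\ \longmapsto\ e^{C_0 s}\,\frac{P(E;B_s(x))}{s^{n-1}}
\]
is nondecreasing in $s$, and this alone yields $P(E;B_r(x))\le e^{C_0}\,P(E;B_1(x))\,r^{n-1}\le e^{C_0}L\,r^{n-1}$ for every $r\le 1$, with a constant depending only on $C_0$ and $L$ (hence on $n$ and $L$). No smoothness beyond what is needed to state bounded mean curvature, and certainly no Allard regularity, is required here. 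This is exactly how the paper closes the argument.
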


\begin{proof}
Since $E$ is critical there exists $\lambda$ such that 
\[
H_E + 2\gamma v_E = \lambda \qquad \text{on } \, \partial E.
\] 
Let us first estimate  the Largange multiplier $\lambda$ and prove that there exists a constant $C$, which depends on $n$, such that 
\begin{equation} \label{bound lagrange}
\Bigl| \lambda - \frac{n-1}{n}\frac{P(E)}{|E|} \Bigl| \leq C \gamma.
 \end{equation}
We write the functional \eqref{energy} as $J(E) = P(E) + \gamma \text{NL}(E)$, where 
\[
\text{NL}(E) := \frac{1}{n(n-2)\omega_n}   \int_{E}\int_{E} \frac{dxdy}{|x-y|^{n-2}}. 
\]
For $t \in \R$ small  we denote $E_t := (1+t)E$. By  scaling of the functional we get
\[
J(E_t) = (1+t)^{n-1}P(E) + (1+t)^{n+2} \gamma \text{NL}(E). 
\]
Therefore 
\begin{equation} \label{easy scaling}
\frac{d}{dt} \bigl|_{t=0}J(E_t) = (n-1)P(E) + (n+2) \gamma  \text{NL}(E).
\end{equation}
On the other hand if we  choose vector field $X= x$ and  denote  its associated flow by $\Phi$,
\[
\frac{\partial }{\partial t} \Phi_t = X(\Phi_t), \qquad \Phi_0(x)= x,
\]
then we get by the first variation formula and by the Euler equation  that 
\[
\begin{split}
 \frac{d}{dt} \bigl|_{t=0}J(\Phi_t(E)) = \int_{\partial E}  (H_E + 2\gamma v_E) \la x,\nu \ra \, \Ha^{n-1}  = \lambda  \int_{\partial E}  \la x,\nu \ra \, \Ha^{n-1}  = n \lambda |E|.
\end{split}
\]
However, it is easy to see that $\Phi_t(E) = (1+t)E + o(t)$ and therefore  $\frac{d}{dt} \bigl|_{t=0}J(E_t)  = \frac{d}{dt} \bigl|_{t=0}J(\Phi_t(E))$. Thus  we obtain by the previous equality and by \eqref{easy scaling} that 
\[
 n \lambda |E| =  (n-1)P(E) + (n+2) \gamma  \text{NL}(E).
\]
Recall that  the ball maximizes the nonlocal part of the functional, i.e., $\text{NL}(E) \leq \text{NL}(B_1)$ for every $|E| = |B_1|$.
Hence we get  \eqref{bound lagrange}.

Let $\vphi \in H^1(\partial E)$ be as in the statement of the lemma. The stability of $E$ (Definition \ref{stable}) yields
\[
\int_{\partial E} |D_\tau \vphi|^2 - |B_E| \vphi^2 + 2\gamma \partial_\nu v_E \vphi^2 \, d\Ha^{n-1} +  \frac{2\gamma}{n(n-2) \omega_n}  \int_{\partial E} \int_{\partial E} \frac{\vphi(x)\vphi(y)}{|x-y|^{n-2}} \, d\Ha^{n-1}(x) d\Ha^{n-1}(y) \geq 0.
\] 
Since $v_E$ is a solution of 
\[
-\Delta v_E = \chi_E \qquad \text{in }\, \R^n
\] 
it follows from standard rearrangement result  \cite{Talenti} that for every $|E|=|B_1|$ it holds  $||v_{E}||_{L^\infty}\leq ||v_{B_1}||_{L^\infty}\leq C$. Moreover, by differentiating \eqref{potential} and arguing as in \cite[Proposition 2.1]{BC} we get 
\begin{equation} \label{calderon}
 ||v_{E}||_{L^\infty} + ||\nabla v_{E}||_{L^\infty}\leq C.
\end{equation}
Therefore the claim follows once we show that for every $x \in \partial E$ we have
\begin{equation} \label{to show}
 \int_{\partial E}\frac{d\Ha^{n-1}(y)}{|x-y|^{n-2}} \leq  C P(E).
\end{equation}

Let us fix $x \in \partial E$. It follows from the Euler equation, from \eqref{bound lagrange} and \eqref{calderon} that $|H_E| \leq C_0$.  Therefore we have the following monotonicity formula \cite{Dl}
\begin{equation} \label{monotoni}
s \mapsto \frac{P(E, B(x,s))}{s^{n-1}} e^{C_0s} \quad \text{ is nondecreasing on } \, s \in (0,\infty).
\end{equation}
Let us define $r_k = 2^{-k}$ for $k= 0,1,\dots$. It follows from \eqref{monotoni} that  for every $k$ we may  estimate
\[
 \int_{\partial E \cap B_{r_k}(x) \setminus B_{r_{k+1}}(x)} \frac{ d\Ha^{n-1}(y) }{|x-y|^{n-2}} \leq 2^{n-2} \frac{P(E, B_{r_k}(x))}{r_k^{n-2}} \leq C P(E, B_1(x)) r_k.
\]
Therefore we get
\[
\begin{split}
 \int_{\partial E \cap B_1(x)}\frac{d\Ha^{n-1}(y) }{|x-y|^{n-2}} &\leq \sum_{k=0}^\infty   \int_{\partial E \cap B_{r_k}(x) \setminus B_{r_{k+1}}(x) } \frac{d\Ha^{n-1}(y) }{|x-y|^{n-2}} \\
&\leq C P(E, B_1(x)) \sum_{k=0}^\infty    r_k =  C P(E, B_1(x)).
\end{split}
\]
Hence the estimate \eqref{to show} follows since trivially 
\[
 \int_{\partial E \setminus B_1(x)}\frac{d\Ha^{n-1}(y)}{|x-y|^{n-2}} \leq   P(E).
\] 
\end{proof}

We are now ready to prove Theorem \ref{uniqueness}.

\begin{proof}[\textbf{Proof of Theorem \ref{uniqueness}}]
We argue by contradiction and assume that there is a sequence $\gamma_k \to 0$ and  associated  smooth stable critical sets $E_k$  with $P(E_k) \leq L$ such that none of them is a ball. 
 The idea is to use the result from \cite{CM} to conclude that $E_k$ convergences  to a set $E$ which is a union of balls which are tangentially connected. We then use the stability assumption to conclude that $E$ is in fact a single ball. By Allard's regularity theorem and the Euler equation \eqref{euler} we then get that  $E_k \to B_1$  in $W^{2,p}$ for every $p>n$, which   contradicts Lemma~\ref{no other critical}.

First, by criticality 
\begin{equation} \label{euler2}
H_{E_k} + 2\gamma_k v_{E_k} = \lambda_k \qquad \text{on } \, \partial E_k.
\end{equation}
Therefore by \eqref{bound lagrange} and \eqref{calderon} we deduce 
\begin{equation} \label{lagrange estimate}
\frac{1}{C} \leq H_{E_k} \leq C.
\end{equation}
By  Topping's inequality \cite{Top}  we have  
\begin{equation} \label{topping}
\text{diam}(E_k) \leq \int_{\partial E_k} H_{E_k}^{n-2} \, d \Ha^{n-1} \leq CP(E_k) \leq CL
\end{equation}
and thus the sets are uniformly bounded.

Let us   show that $\partial E_k$ is connected when $k$ is large. We do this by using the idea from \cite{StZ}. We argue by contradiction and assume that there are at least two components  $\Gamma_1, \Gamma_2$ of  $\partial E_k$. It follows from the curvature bounds  \eqref{lagrange estimate} and from an  isoperimetric type estimate in  \cite{Alm} that 
$\Ha^{n-1}(\Gamma_i) \geq P(B_r)>0$ for $i=1,2$, where the radius of $B_r$ is chosen such that $\frac{n-1}{r}=C$, where  $C$ is the constant in \eqref{lagrange estimate}.  We choose locally constant testfunction in the second variation formula such that $\varphi = 1$ on $ \Gamma_1$ and $\varphi = -\alpha$ on $ \Gamma_2$, where $\alpha >0$ is chosen such that $\int_{\partial E_k} \varphi = 0$.  Note that by the previous discussion we have a uniform bound on  $\alpha$ from above and below. Therefore  Lemma \ref{estimate second} yields
\begin{equation} \label{second taas}
- \int_{\Gamma_1} |B_{E_k}|^2 \, d \Ha^{n-1} \geq C \gamma_k.
\end{equation} 
On the other hand, by \eqref{lagrange estimate} we have that 
\[
|B_{E_k}|^2 \geq \frac{H_{E_k}^2}{n-1} = \frac{1}{(n-1)C^2}, 
\]
which contradicts \eqref{second taas} when $k$ is large. Thus we conclude that the boundary of $E_k$ is connected.

We may  assume, by extracting a subsequence, that there exists a set $E \subset \R^n$ of finite perimeter such that $E_k \to E$ in $L^1$. 
It follows from \eqref{calderon} and \eqref{euler2}  that the sets $E_k$ have almost constant mean curvature and therefore it follows from  \cite[Theorem 1.1]{CM} that $E$ is a union of disjoint balls with equal radii, i.e., there exists $N \in \mathbb{N}$  and a family of disjoint  balls $B(x_1,r), \dots, B(x_N,r)$ such that 
\[
E= \bigcup_{i=1}^N B(x_i,r).
\] 
Each ball in the family is tangent to another one, by which we mean that for every $B(x_i,r)$ there exists $B(x_j,r)$, $j \neq i$, such that  $\bar{B}(x_i,r) \cap \bar{B}(x_j,r) \neq \emptyset$.
 Let us denote by $\Sigma$ the set of all tangent  points in $\bar{E}$, i.e.,
\[
\Sigma := \bigcup_{\underset{ i\neq j}{i,j=1} }^N \bar{B}(x_i,r) \cap \bar{B}(x_j,r). 
\]
Moreover it follows from   \cite[Theorem 1.1]{CM} and from the diameter bound \eqref{topping}  that 
\begin{equation} \label{CM estimates}
\lim_{k \to \infty}P(E_k) = P(E) = N P(B_r) \qquad \text{and}\qquad  \lim_{k \to \infty} \text{hd}(\partial E_k, \partial E) = 0,
\end{equation}
where 'hd' denotes the Hausdorff distance between two sets.  Note that by \eqref{lagrange estimate} the radii of the balls are bounded from below $r \geq 1/C$ and their number  is bounded $N\leq C$.  Let $0 < \delta << r$ be a small number which we choose later. Let us denote the $\delta$-neighborhood of  $\Sigma$ by $\mathcal{N}_{\delta}(\Sigma)$, i.e., 
\begin{equation} \label{n delta}
\mathcal{N}_{\delta}(\Sigma) :=  \bigcup_{y \in \Sigma} B(y,\delta).
\end{equation}
Similarly  let us denote the $\delta$-neighborhood of $\partial E$ by $\mathcal{N}_{\delta}(\partial E)$ and finally
\begin{equation} \label{u delta}
U_\delta :=  \mathcal{N}_{\delta}(\partial E) \setminus \mathcal{N}_{\delta}(\Sigma).
\end{equation}
From \cite[Theorem 1.1]{CM} we  have that  $\partial E_k \cap U_\delta$ is a $C^{1,\alpha}$-manifold when $k$ is large, and it  converges to  $\partial E \cap U_\delta$ in $C^{1,\alpha}$-sense. By this we mean that there is a sequence of  $C^{1,\alpha}$-diffeomorphisms $\Phi_k: \partial E \cap U_\delta  \to  \Phi_k(\partial E \cap U_\delta) \subset \partial E_k$ such that
\[
\lim_{k \to \infty}||\Phi_k - \text{Id}||_{C^{1,\alpha}}=0.
\]

Let us next show that the Lagrange multipliers converges $\lambda_k \to \lambda$  and  that $\lambda$ is the Lagrange multiplier of the limit set $E$. Indeed, let us fix a vector field $X \in C^\infty(\R^n;\R^n)$ such that $X=0$ in  $\mathcal{N}_{\delta}(\Sigma)$. Then by the Euler equation \eqref{euler2} and by the previous $C^{1,\alpha}$-convergence we get
\[
\begin{split}
\int_{\partial E_k} (\lambda_k - 2\gamma_k v_{E_k}) (X \cdot \nu_{E_k}) \, d \Ha^{n-1} &= \int_{\partial E_k} H_{E_k} (X \cdot \nu_{E_k}) \, d \Ha^{n-1} = \int_{\partial E_k} \text{div}_{\tau_{E_k}} X  \, d \Ha^{n-1} \\
&\to \int_{\partial E} \text{div}_{\tau_E} X  \, d \Ha^{n-1} \qquad \text{as }\, k \to \infty \\
&=\int_{\partial E} \lambda  (X \cdot \nu_{E}) \, d \Ha^{n-1}. 
\end{split}
\]
Since this holds for any vector field $X$ and since $\gamma_k v_{E_k} \to 0$ we conclude that $\lambda_k \to \lambda$.  Arguing as in \cite[Lemma 7.2]{AFM} this in turn implies that $\partial E_k \cap U_\delta$ converges to $\partial E \cap U_\delta$ in $W^{2,p}$ for every $p >n$, i.e., 
\begin{equation} \label{w 2 p}
\lim_{k \to \infty}||\Phi_k - \text{Id}||_{W^{2,p}(\partial E \cap U_\delta)}=0 \qquad \text{for every } \, p > n, 
\end{equation}
where $\Phi_k: \partial E \cap U_\delta  \to  \Phi_k(\partial E \cap U_\delta) \subset \partial E_k$.
 
Let us next prove that the limit set $E$ consists only on a single ball. This will follow from the stability of $E_k$.   We argue by contradiction and assume that $N \geq 2$ for  $E = \bigcup_{i=1}^N B(x_i,r)$.  
The argument is different in the case  $n=3$ and  $n\geq 4$. 

\bigskip

\noindent \textbf{The case $n=3$}. In this case we use an  argument similar to \cite{BdC, Wen} to show that $E_k$ are nearly umbilical. This together with standard estimates on Willmore energy imply that the limit set $E$ has to be a single ball.

Let us choose a testfunction $\vphi = x\cdot \nu - \sigma$ in the second variation condition. Here $\sigma \in \R$ is chosen such that $\int_{\partial E_k} \vphi\, d \Ha^2 = 0$. By the divergence theorem we may solve $\sigma$ 
\[
\sigma P(E_k) = \int_{\partial  E_k} \sigma \, d \Ha^2 =   \int_{E_k}  x\cdot \nu \, dx =  \int_{E_k} \text{div}(x) \, dx = 3 |E_k|.
\] 
Hence 
\begin{equation}
 \label{sigma thing}
\sigma =\frac{3 |E_k|}{P(E_k) } \geq c >0.
\end{equation}
Note that by \eqref{topping} we may assume, by translating the sets $E_k$, that  $||\vphi||_{L^\infty} \leq C$ on $\partial E_k$.

Let us fix a basis $\{e_1,e_2,e_3 \}$ in $\R^3$. We denote $\nu_j = \nu \cdot e_j$ and $\delta_j f := \nabla_\tau f \cdot e_j$ for any given smooth function $f$ on $\partial E_k$. We may extend $f$ smoothly to a neighborhood of $\partial E_k$ and then  $\delta_j f :=  \partial_{x_j}f -   (\nabla f \cdot e_j) \,  \nu_j$.    Next we use a well know geometric equality \cite[Eq.~(10.16)]{Giu} and the Euler equation \eqref{euler2}  to deduce 
\[
\Delta_\tau \nu_j = -|B_{E_k}|^2\nu_j + \delta_j H_{E_k} =  -|B_{E_k}|^2\nu_j  - 2\gamma_k \delta_j  v_{E_k} 
\]
for  $j = 1, 2, 3$. It is well known, and straighforward to see, that  $\Delta_\tau x_j = - H_{E_k}  \nu_j$ on $\partial E_k$. Therefore we have 
\begin{equation}
 \label{geometric}
\begin{split}
\Delta_\tau \vphi &= \sum_{j=1}^3 (\Delta_\tau \nu_j x_j +2 \nabla_\tau \nu_j \cdot \nabla_\tau x_j + \nu_j \Delta_\tau x_j)    \\
&=  \sum_{j=1}^3 (\Delta_\tau \nu_j x_j +2 \nabla_\tau \nu_j \cdot e_j  - H_{E_k} \nu_j^2)   \\
&=  \sum_{j=1}^3 \Delta_\tau \nu_j x_j + 2H_{E_k} - H_{E_k}  \\
&=  - |B_{E_k}|^2\vphi - |B_{E_k}|^2 \sigma +  H_{E_k} - 2\gamma_k \sum_{j=1}^3 \delta_j  v_{E_k} x_j.
\end{split}
\end{equation}
We multiply \eqref{geometric} by $\vphi$, integrate over $\partial E_k$ and get
\[
\begin{split}
\int_{\partial E_k} |D_\tau \vphi|^2 - |B_{E_k}|^2 \vphi^2 \, d \Ha^{2} &= \int_{\partial E_k} \sigma |B_{E_k}|^2 \vphi  -  (H_{E_k}  - 2\gamma_k \sum_{j=1}^3 \delta_j  v_{E_k} x_j) \vphi \, d \Ha^{2}  \\
&= \int_{\partial E_k} \sigma |B_{E_k}|^2 \vphi  -  (\lambda_k - 2\gamma_k v_{E_k} - 2\gamma_k \sum_{j=1}^3 \delta_j  v_{E_k} x_j) \vphi \, d \Ha^{2} \\
&\leq  \sigma  \int_{\partial E_k} |B_{E_k}|^2  \vphi \, d \Ha^{2} + C \gamma_k,
\end{split}
\]
where the last inequality follows from \eqref{calderon} and from the fact that $\vphi$ has zero average. Therefore Lemma \ref{estimate second} and \eqref{sigma thing}  imply
\begin{equation}
 \label{from_stability}
-\int_{\partial E_k} |B_{E_k}|^2  \vphi \, \Ha^{2} \leq C\gamma_k.
\end{equation}
Note that \eqref{bound lagrange}, the Euler equation \eqref{euler2}, and \eqref{sigma thing} yield
\[
\frac{1}{\sigma} \leq \frac{H_{E_k}}{2} + C \gamma_k.
\]
We integrate \eqref{geometric} over $\partial E_k$ and obtain by the above inequality that
\[
\begin{split}
-\int_{\partial E_k} |B_{E_k}|^2  \vphi \, d \Ha^{2} &\geq \int_{\partial E_k}  \sigma  |B_{E_k}|^2 - H_{E_k} \, d \Ha^{2} - C\gamma_k  \\
&\geq \sigma  \int_{\partial E_k}  |B_{E_k}|^2  - \frac{H_{E_k}^2}{2}\,  d \Ha^{2}  - C\gamma_k\\
&= \frac{\sigma}{2}  \int_{\partial E_k}   (\kappa_1 - \kappa_2)^2 \, d \Ha^{2} - C\gamma_k,
\end{split}
\]
where $\kappa_1$ and $\kappa_2$ are the principle curvatures. Hence we have by \eqref{from_stability} that
\begin{equation}
 \label{umbilical}
 \int_{\partial E_k}  (\kappa_1 - \kappa_2)^2 \,  d \Ha^{2}  \leq C\gamma_k.
\end{equation}
When $\gamma_k$ is small  it follows  from \eqref{umbilical} and  e.g. from \cite[Lemma 2.2]{DlM} that $\partial E_k$ has genus zero.  Therefore \eqref{umbilical} and Gauss-Bonnet theorem yield
\begin{equation}
 \label{gauss}
 \int_{\partial E_k} |B_{E_k}|^2 \, d \Ha^2 \leq  8 \pi + C\gamma_k.
\end{equation}

On the other hand  from the $W^{2,p}$-convergence  \eqref{w 2 p} it follows that  
\[
\begin{split}
\lim_{k \to \infty} \int_{\partial E_k \cap U_\delta} |B_{E_k}|^2 \, d \Ha^2 = \int_{\partial E \cap U_\delta} |B_{E}|^2 \, d \Ha^2  =  \frac{2}{r^2}  P(E; U_\delta).
\end{split}
\]
Recall the  definition of $ \mathcal{N}_{\delta}(\Sigma)$ and $U_\delta$, \eqref{n delta} and \eqref{u delta}. From the monotonicity formula \eqref{monotoni} we 
get that $P(E; \mathcal{N}_{\delta}(\Sigma)) \leq C \delta^2$. Therefore we have  
\[
P(E; U_\delta) \geq P(E) - P(E; \mathcal{N}_{\delta}(\Sigma)) \geq P(E) - C\delta^2 = 4 N\pi r^2 -C\delta^2.
\] 
Hence we get 
\[
\lim_{k \to \infty} \int_{\partial E_k \cap U_\delta} |B_{E_k}|^2 \, d \Ha^2 \geq 8 N \, \pi  - C\delta^2,
\]
which contradicts \eqref{gauss} if $N \geq 2$ when $\delta$ is small and we conclude that $E$ is a ball. By translation we may assume that $E = B_1$. 

Let us next fix $\alpha \in (0,1)$ and show that for every $k$ large there exists a function $\psi_k : \partial B_1 \to \partial E_k$ such that  
\begin{equation}
 \label{see yks alfa}
\partial E_k = \{ \psi_k(x)x + x \, \mid x \in \partial B_1  \} \qquad \text{and} \qquad ||\psi||_{C^{1,\alpha}}\leq c,
\end{equation}
where $c$ is idependent of $k$. To this aim we fix $x \in \partial B_1$ and let $\sigma(\alpha)$ be the constant from Theorem \ref{allard}. Choosing  $\rho>0$ small we get   
$\sigma(B_1,x,\rho) <\frac{\sigma(\alpha)}{2}$, where the excess $\sigma$ is defined in \eqref{excess}. The claim \eqref{see yks alfa} then follows from Theorem \ref{allard} once we show that 
\begin{equation}
 \label{lokaali peri}
\lim_{k \to \infty} P(E_k, B_\rho(x)) =P(B_1, B_\rho(x)).
\end{equation}
By the lower semicontinuity of the perimeter we have that $\lim_{k \to \infty} P(E_k, B_\rho(x)) \geq P(B_1, B_\rho(x))$. Suppose that  $\lim_{k \to \infty} P(E_k, B_\rho(x)) > P(B_1, B_\rho(x))$. Then again by the lower semicontinuity we have   $\lim_{k \to \infty} P(E_k, \R^n \setminus B_\rho(x)) \geq  P(B_1,  \R^n \setminus B_\rho(x))$. This implies that 
\[
\lim_{k \to \infty} P(E_k)  > P(B_1)
\]   
which is a contradiction since by \eqref{CM estimates} we have $\lim_{k \to \infty} P(E_k)  = P(B_1)$. Hence we have \eqref{lokaali peri}, and in turn \eqref{see yks alfa}.

We need yet to show that  $\psi_k \to 0$ in $W^{2,p}$ for every $p>n$, where $\psi_k$ are the functions from \eqref{see yks alfa}.  By the stability of the isoperimetric inequality proved in \cite{FJ} and by  \eqref{see yks alfa} we have 
\[
P(E_k) - P(B_1) \geq c \min_{y \in \R^n}\int_{\partial E_k}\Big|\nu_E(x)- \frac{x-y}{|x-y|}\Big|^2\,d\Ha^{n-1}(x) \geq  c ||D \psi_k||_{L^2(\partial B_1)}^2.
\]
Therefore by  \eqref{CM estimates} and  \eqref{see yks alfa} we conclude that  $\psi_k \to 0$ in $C^1(\partial B_1)$, up to a subsequence.   Arguing as in \eqref{w 2 p} we finally conclude that $\psi_k \to 0$ in $W^{2,p}$ for every $p>n$.
By Lemma \ref{no other critical} we have $E_k = B_1(y_k)$ for some $y_k$ when $k$ is large, which is a contradiction and the result follows in the case $n=3$.

\bigskip

\noindent \textbf{The case $n \geq 4$}. In this case we apply an argument similar to  \cite{StZ} where we use the stability condition to conclude that $\partial E_k$ satisfy a Poincar\'e type inequality. From this we may conclude that  the limit set $E$ is connected and it has to be a single ball. 

First we assume that the limit set $E$ is just a union of two tangent balls  since the general case $N \geq 3$ follows from a similar argument. Indeed, as we already mentioned, we will show that the stability of $E_k$ imply that Poincar\'e type inequality holds for $\partial E$ and obtain a contradiction by analyzing  $E$ near the  set $\Sigma$ which is a finite union of isolated points.  Since the argument is  local near the tangent  points  we may assume that $N =2$.  By translating and rotating we may further assume that 
\[
E = B(x_1,r) \cup B(x_2,r)
\]
for  $x_1= re_n$ and $x_2= -re_n$, where $e_n$ is the $x_n$-coordinate direction. Note that in this case $\Sigma = \{0\}$.

Let us define a function $f: \R \to [-1,1]$ as
\[
f(t) := \begin{cases}  
\min \{ \frac{r}{\delta^2}t -1, 1\} \, &\text{when }\, t \geq\frac{\delta^2}{r} \\
 0  \, &\text{when }\, - \frac{\delta^2}{r} < t < \frac{\delta^2}{r} \\
-\min \{ \frac{r}{\delta^2}t -1, 1\} \, &\text{when }\, t \leq -\frac{\delta^2}{r}. 
\end{cases}
\]
 Let us further define $\vphi : \R^n \to \R$ as  $\vphi(x) := f(x_n)$. Note  that $\vphi \equiv 0$ on $\partial E \cap B_{\delta}$ and  that $\int_{\partial E} \vphi \, d \Ha^{n-1}= 0$. Let us show 
that the following inequality holds on $\partial E$
\begin{equation}
 \label{poincare type}
 \int_{\partial E \setminus B_\delta } |D_\tau \vphi|^2  - \frac{2}{r^2} \vphi^2  \, d \Ha^{n-1} \geq 0.
\end{equation}

To this aim denote $\bar{f}_k = \int_{\partial E_k} f(x_n) \, d \Ha^{n-1}$ and  choose $\vphi_k(x) = f(x_n) - \bar{f}_k$ in the second variation formula of $E_k$. Lemma \ref{estimate second} gives
\[
\int_{\partial E_k} |D_{\tau_k} \vphi_k|^2 - |B_{E_k}|^2 \vphi_k^2  \, d\Ha^{n-1} \geq - C \gamma_k.
\] 
Since $\vphi \equiv 0$ on $\partial E \cap B_{\delta}$ we deduce from  the Hausdorff convergence in \eqref{CM estimates} that  $D_{\tau_k} \vphi_k = 0$ on $\partial E_k \cap  B_\delta$ when $k$ is large. Hence we  have that 
\begin{equation}
 \label{poincare type k}
\int_{\partial E_k \setminus B_\delta} |D_{\tau_k} \vphi_k|^2 - |B_{E_k}|^2 \vphi_k^2  \, d\Ha^{n-1} \geq - C \gamma_k.
\end{equation}
By the $W^{2,p}$-convergence  of $\partial E_k \setminus B_\delta$ proved  in \eqref{w 2 p}  it  follows that 
\[
\lim_{k \to \infty}\int_{\partial E_k \setminus B_\delta} |D_{\tau_k} \vphi_k|^2 \, d\Ha^{n-1} =  \int_{\partial E \setminus B_\delta } |D_\tau \vphi|^2   \, d \Ha^{n-1}
\]
and
\[
\lim_{k \to \infty}\int_{\partial E_k \setminus B_\delta}|B_{E_k}|^2 \vphi_k^2  \, d\Ha^{n-1} =  \int_{\partial E \setminus B_\delta } |B_{E}|^2 \vphi^2   \, d \Ha^{n-1}.
\]
Hence we may pass to the limit  in \eqref{poincare type k} and get \eqref{poincare type}.

Let  us study the inequality \eqref{poincare type} and  define $\Sigma_{3\delta} := \partial E \cap B_{3\delta}$. Note that $\partial E \setminus B_{3\delta}$ is disconnected and $\vphi \equiv 1$ on $ \partial E \setminus B_{3\delta} \cap \{x_n >0\}$ and  $\vphi \equiv -1$ on $ \partial E \setminus \Sigma_{3\delta} \cap \{x_n <0\}$.  In particular,   $|D_\tau \vphi| = 0$ on $\partial E \setminus B_{3\delta}$.  It is also clear that on $\Sigma_{3\delta}$ for every tangent vector $\tau_i$  it holds  $|\tau_i \cdot e_n| \leq C \delta$.  Therefore we may estimate the tangential gradient of $\vphi$ on $\Sigma_{3\delta}$  as
\[
|D_\tau \vphi| \leq \sum_{i=1}^{n-1} |f'| |\tau_i \cdot e_n| \leq C \delta^{-1}.
\] 
On the other hand we may estimate the area  of $\Sigma_{3\delta}$ by
\[
\Ha^{n-1}(\Sigma_{3\delta}) =  P(E; B_{3\delta}) \leq C \delta^{n-1}.
\]
Therefore \eqref{poincare type} gives 
\[
0 \leq  C\Ha^{n-1}(\Sigma_{3\delta})   \delta^{-2}  - \frac{2}{r^2}  \Ha^{n-1}( \partial E \setminus B_{3\delta}) \leq C  \delta^{n-3}  - \frac{2}{r^2}  P(B_1)
\]
when $\delta >0$ is small.  This is clearly a contradiction in the case $n \geq 4$ when $\delta>0$ is  small enough. Hence $E$ is a single ball and 
we conclude, as in the case $n=3$, that $E_k \to B_1$ in $W^{2,p}$ for every $p>n$. Again as in the case $n=3$, we conclude the proof by Lemma \ref{no other critical}.
\end{proof}

\section*{Acknowledgment}
 This  work was supported by the  Academy of Finland grant 268393.

\end{document}